\documentclass{amsart}
\usepackage{amsmath}
\newtheorem{theorem}{Theorem}[section]
\newtheorem{lemma}[theorem]{Lemma}

\theoremstyle{definition}
\newtheorem{definition}[theorem]{Definition}
\newtheorem{example}[theorem]{Example}

\newcommand{\Fp}{{\mathbb F}_p}
\newcommand{\Z}{{\mathbb Z}}
\newcommand{\fF}{{\mathfrak F}}
\newcommand{\fH}{{\mathfrak H}} 
\newcommand{\fN}{{\mathfrak N}} 
\newcommand{\cser}{\mathcal{C}}
\newcommand{\sB}{\mathcal B}
\DeclareMathOperator{\Hom}{Hom} 
\DeclareMathOperator{\Ext}{Ext}
\DeclareMathOperator{\loc}{Loc}

\title[$\fF$-hypercentral and $\fF$-hypereccentric modules]{On $\fF$-hypercentral and $\fF$-hypereccentric modules for finite soluble groups}
\author{Donald W. Barnes}
\address{1 Little Wonga Rd.\\Cremorne NSW 2090\\Australia\\}
\email{donwb@iprimus.com.au}

\subjclass[2010]{Primary 20D10, Secondary 17B30, 17A32}
\keywords{Saturated formations, soluble groups, Lie algebras}

\begin{document}

\maketitle

\begin{abstract}   I prove the group theory analogues of some Lie and Leibniz algebra results on $\fF$-hypercentral and $\fF$-hypereccentric modules.
\end{abstract}

\section{Introduction}
The theory of saturated formations and projectors for finite soluble groups was started by Gasch\"utz in \cite{Gasc}, further developed by Gasch\"utz and Lubeseder in \cite{Gasc-L} and extended by Schunck in \cite{Sch}.  This theory is set out in Doerk and Hawkes \cite{DH}.  The analogous theory for Lie algebras was developed by Barnes and Gastineau-Hills in \cite{BGH} and for Leibniz algebras by Barnes in \cite{SchunckLeib}.

If $\fF$ is a saturated formation of soluble Lie algebras and $V,W$ are $\fF$-hypercentral modules  for the soluble Lie algebra $L$, then the modules $V \otimes W$ and $\Hom(V,W)$ are $\fF$-hypercentral by Barnes \cite[Theorem 2.1]{HyperC}, while if $V$ is $\fF$ hypercentral and $W$ is $\fF$-hypereccentric, then $V \otimes W$ and $\Hom(V,W)$ are $\fF$-hypereccentric by Barnes \cite[Theorem 2.3]{hexc}.  If $L\in\fF$ and $V$ is an $L$-module, then $V$ is the direct sum of a $\fF$-hypercentral submodule $V^+$ and a $\fF$-hypereccentric submodule $V^-$ by Barnes \cite[Theorem 4.4]{HyperC}.  The group theory analogues of these theorems are easily proved if we restrict attention to modules over the field $\Fp$ of $p$ elements, (the case which arises from considering chief factors of soluble groups), but the concepts are meaningful for modules over arbitrary fields of characteristic $p$, so I prove them in this generality.

All groups considered in this paper are finite.   If $V$ is an $FG$-module, I denote the centraliser of $V$ in $G$ by $\cser_G(V)$.  In the following, $\fF$ is a saturated formation of finite soluble groups.  By Lubeseder's Theorem, (see Doerk and Hawkes \cite[Theorem  IV 4.6, p. 368]{DH}) $\fF$ is locally defined, that is, we have for each prime $p$, a (possibly empty) formation $f(p)$ and $\fF$ is the class of all groups $G$ such that, if $A/B$ is a chief factor of $G$ of $p$-power order,  $G/\cser_G(A/B) \in f(p)$.  In this case, we write $\fF = \loc(f)$ and call it the formation locally defined by $f$.  The formation function $f$ is called {\em integrated} if, for all $p$, $f(p) \subseteq \loc(f)$.  A saturated formation always has an integrated local definition.  In this paper, I will always assume that the formation function we are using is integrated.

\section{$\fF$-hypercentral and $\fF$-hypereccentric modules}
Let $G$ be a soluble group whose order $|G|$ is divisible by the prime $p$, and let $F$ be a field of characteristic $p$.   I denote by $B_1(FG)$ the principal block of irreducible $FG$-modules.  An irreducible $FG$-module $V$ is called $\fF$-central (or $(G,\fF)$-central if I need to specify the group) if $G/\cser_G(V) \in f(p)$ and $\fF$-eccentric otherwise. For the special case of $F = \Fp$, $V$ $\fF$-central is equivalent to the split extension of $V$ by $G/\cser_G(V)$ being in $\fF$ (provided that $f$ is integrated).  A module $V$ is called $\fF$-hypercentral if every composition factor of $V$ is $\fF$-central.  It is called $\fF$-hypereccentric if every composition factor is $\fF$-eccentric.

\begin{theorem}\label{tensC} Let $V, W$ be $\fF$-central $FG$-modules.  Then $V \otimes W$ and $\Hom(V,W)$ are $\fF$-hypercentral. \end{theorem}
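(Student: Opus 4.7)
My plan is to prove both statements simultaneously by a direct argument on composition factors. The key observation is that the $G$-action on both $V \otimes W$ and $\Hom(V,W)$ factors through the quotient $G/N$, where $N := \cser_G(V) \cap \cser_G(W)$: indeed, any $g \in N$ acts trivially on both tensor factors, hence trivially on their tensor product and on the module of homomorphisms between them. In particular, $N$ is contained in the centraliser of every composition factor of either module.

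Next I would invoke the closure properties of the local definition $f(p)$. Since $V$ and $W$ are $\fF$-central, both $G/\cser_G(V)$ and $G/\cser_G(W)$ lie in $f(p)$, and the natural map
\[
G \longrightarrow G/\cser_G(V) \times G/\cser_G(W)
\]
has kernel $N$, realising $G/N$ as a subdirect product of two members of $f(p)$. Since $f(p)$ is a formation, it is closed under subdirect products, so $G/N \in f(p)$.

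Finally, let $X$ be an arbitrary composition factor of $V \otimes W$ (or of $\Hom(V,W)$). Since $X$ is a subquotient of a module centralised by $N$, we have $N \subseteq \cser_G(X)$, so $G/\cser_G(X)$ is a quotient of $G/N$. Formations being closed under homomorphic images, $G/\cser_G(X) \in f(p)$, i.e.\ $X$ is $\fF$-central. As this holds for every composition factor, both $V \otimes W$ and $\Hom(V,W)$ are $\fF$-hypercentral by definition.

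I do not anticipate any genuine obstacle: the proof rests only on the standard closure properties of formations of finite groups (under quotients and subdirect products), together with the elementary observation that the $G$-action on each of $V \otimes W$ and $\Hom(V,W)$ is killed by the intersection of the two centralisers. The main thing to state carefully is the subdirect-product step, as this is where the hypothesis that $f(p)$ is a formation (rather than merely a class of groups) gets used.
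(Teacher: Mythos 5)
Your proposal is correct and takes essentially the same route as the paper: the paper sets $C = \cser_G(V) \cap \cser_G(W)$, observes that $V \otimes W$ and $\Hom(V,W)$ are $G/C$-modules with $G/C \in f(p)$, and leaves the rest as ``it follows easily.'' Your write-up simply makes explicit the two closure properties (subdirect products and quotients) that the paper's terse proof relies on.
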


\begin{proof}  Let $A = \cser_G(V)$ and $B = \cser_G(W)$.  Put $C = A \cap B$.  Then $V \otimes W$ and $\Hom(V,W)$ are $G/C$-modules and $G/C \in f(p)$.  It follows  easily that $V \otimes W$ and $\Hom(V,W)$ are $\fF$-hypercentral.
\end{proof}

Proving the analogue of Barnes \cite[Theorem 2.3]{hexc} requires more work.  I first prove the analogue of Barnes \cite[Theorem 4.4]{HyperC} which follows easily from the following lemma.

\begin{lemma} \label{cecsplit}  Let  $G \in \fF$. Let $A,B$ be irreducible $FG$-modules and let $V$ be an extension of $A$ by $B$.  Suppose one of $A,B$ is $\fF$-central and the other $\fF$-eccentric.  Then $V$ splits over $A$.
\end{lemma}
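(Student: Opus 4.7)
The plan is to suppose $V$ does not split over $A$ and derive a contradiction.  First I reduce, by dualizing, to the configuration in which the submodule $A$ is $\fF$-eccentric and the quotient $B$ is $\fF$-central: the dual sequence $0\to B^*\to V^*\to A^*\to 0$ splits if and only if the original does, and $\cser_G(X^*)=\cser_G(X)$ for any $FG$-module $X$, so the central/eccentric labels are preserved under dualization.

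Now let $H$ be the split extension of $V$ by $G$.  Since $V$ is abelian, $A$ is a normal subgroup of $H$, and conjugation by $V$ is trivial on both $A$ and $V/A$.  The key step is to show $A\subseteq\Phi(H)$.  Non-splitness implies that $A$ is the unique proper nonzero $G$-submodule of $V$: any other would project isomorphically onto $B$ and furnish a complement.  Let $M$ be a maximal subgroup of $H$.  If $V\subseteq M$ then $A\subseteq M$.  Otherwise $MV=H$, and $M\cap V$ is normalized by $M$ and by the abelian $V$, hence is a proper $G$-submodule of $V$, so $M\cap V\in\{0,A\}$.  The option $M\cap V=0$ is impossible, for then $MA$ would strictly contain $M$ (as $A\ne 0$) and be strictly contained in $H$ (as $A\ne V$), contradicting maximality of $M$.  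Hence every maximal subgroup of $H$ contains $A$.

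Next, $H/A$ is the split extension of $B$ by $G$; its $p$-power chief factors are $B$ and the chief factors pulled back from $G$.  The centralizer quotient for $B$ is $G/\cser_G(B)\in f(p)$, by $\fF$-centrality of $B$, and the others lie in $f(p)$ because $G\in\fF=\loc(f)$.  Hence $H/A\in\fF$.  Since $A\subseteq\Phi(H)$, the quotient $H/\Phi(H)$ is a further quotient of $H/A$ and so also lies in $\fF$; saturation of $\fF$ then gives $H\in\fF$.  But $A$ is itself a chief factor of $H$ (minimal normal, being $G$-irreducible and centralized by $V$), with centralizer quotient $G/\cser_G(A)$; so $G/\cser_G(A)\in f(p)$, making $A$ $\fF$-central and contradicting the reduction.

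The main obstacle is the Frattini step: ruling out a maximal $M$ with $M\cap V=0$ is precisely what converts non-splitness of $V$ into $A\subseteq\Phi(H)$, and the rest of the argument rides on this inclusion together with saturation.  A minor technical point is that when $F$ is infinite the group $H$ is infinite; this is handled by first passing to a finite subfield $F_0\subseteq F$ over which the $G$-action on $V$ is defined (possible since $G$ is finite), running the argument for the corresponding finite $F_0G$-module, and using that the extension splits over $F$ if and only if it splits over $F_0$.
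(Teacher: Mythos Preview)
Your argument is essentially the same as the paper's: form the semidirect product $H=V\rtimes G$, use saturation of $\fF$ together with a Frattini argument, and conclude from $\fF$-eccentricity of $A$.  The paper runs this in the direct form (from $X/A\in\fF$ and $X\notin\fF$ deduce $A\not\subseteq\Phi(X)$, hence a complement), while you run the contrapositive and spell out the Frattini step in more detail; these are the same argument.

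Two small points.  First, over a finite field $F\neq\Fp$ (which is exactly the situation you land in after descending to $F_0$), your assertion that ``$A$ is itself a chief factor of $H$ (minimal normal, being $G$-irreducible)'' need not hold: chief factors of $H$ inside $V$ are $\Fp G$-composition factors, and an irreducible $FG$-module can be $\Fp G$-reducible.  This does not damage the argument, because $A$ (and likewise $B$) is $\Fp G$-semisimple and isotypic---each $\theta C$ for $\theta\in F^\times$ is an $\Fp G$-submodule isomorphic to $C$---so every chief factor of $H$ inside $A$ has centraliser quotient $G/\cser_G(A)\notin f(p)$, and every chief factor of $H/A$ inside $B$ has centraliser quotient $G/\cser_G(B)\in f(p)$.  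So $H/A\in\fF$ and $H\notin\fF$ still follow; just adjust the wording.

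Second, your reduction from infinite $F$ to a finite subfield $F_0$ is a legitimate alternative to the paper's reduction all the way to $\Fp$.  The paper instead picks irreducible $\Fp G$-submodules $C\subseteq A$, $D\subseteq B$, verifies $\cser_G(C)=\cser_G(A)$ and $\cser_G(D)=\cser_G(B)$, uses the $\Fp$ case to get $\Ext^1_{\Fp G}(D,C)=0$, and then base-changes $\Ext$ back up to $F$.  Your field-of-definition descent is cleaner in that it keeps $A_0,B_0$ irreducible over $F_0$ (since any proper $F_0G$-submodule would tensor up to a proper $FG$-submodule), and the needed direction---if $V_0$ splits then $V=F\otimes_{F_0}V_0$ splits---is immediate.
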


\begin{proof}  A module $A$ is $\fF$-central if and only if its dual $\Hom(A,F)$ is $\fF$-central.  Hence we need only consider the case in which $A$ is $\fF$-eccentric and $B = V/A$ is $\fF$-central.

First consider the case $F=\Fp$.  Consider the split extension $X$ of $V$ by $G$.  Since $V/A$ is an $\fF$-central chief factor of $X$ and $X/V \in \fF$, we have $X/A \in \fF$.  But $A$ is $\fF$-eccentric, so $X \notin \fF$.  Therefore $X$ splits over $A$ and it follows that $V$ splits over $A$.

Now let $\{\theta_i \mid i \in I\}$ be a basis (possibly infinite) of $F$ over $\Fp$.  We now consider $V$ as an $\Fp G$-module.  Take $a \in A$, $a \ne 0$.  Then $(\Fp G)a$ is a finite-dimensional submodule of $A$, so there exists a finite-dimensional  irreducible $\Fp G$-submodule $C$ of $A$.  Then $\theta_iC$ is a submodule isomorphic to $C$. It follows that $A = \oplus_{i\in I} \theta_iC = F \otimes_{\Fp} C$  and $G/\cser_G(A) = G/\cser_G(C)$.  Thus $C$ is $\fF$-eccentric.  Similarly, there exists a finite-dimensional irreducible $\Fp G$-submodule $D$ of $B$ and $B = \oplus_{i \in I} \theta_i D = F \otimes_{\Fp} D$.  Also, $D$ is $\fF$-central.  It follows that $\Ext^1(D,C) = 0$.  But
$$\Ext^1_{FG}(B,A) = F \otimes_{\Fp} \Ext^1_{\Fp G}(D,C) =0.$$
Therefore $V$ splits over $A$.
\end{proof}

\begin{theorem} \label{components}    Let $G \in \fF$ and let $V$ be an $FG$-module.  Then there exists a direct decomposition $V = V^+ \oplus V^-$ where $V^+$ is $\fF$-hypercentral and $V^-$ is $\fF$-hypereccentric.
\end{theorem}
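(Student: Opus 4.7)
The plan is to induct on the composition length of $V$. The base case is when $V$ is irreducible: then $V$ is either $\fF$-central or $\fF$-eccentric, so one of $V^+,V^-$ equals $V$ and the other is $0$.

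Before running the inductive step, I would first strengthen Lemma \ref{cecsplit} to the following form: any short exact sequence $0 \to A \to E \to B \to 0$ of $FG$-modules in which one of $A,B$ is $\fF$-hypercentral and the other $\fF$-hypereccentric must split. I would prove this by a double induction on the composition lengths of $A$ and $B$, with Lemma \ref{cecsplit} supplying the base case in which both are irreducible. The inductive step picks a minimal submodule of whichever factor has length $\ge 2$ and splices two shorter splittings together; equivalently, one can phrase this as showing $\Ext^1_{FG}(B,A)=0$ by running the long exact sequence of $\Ext$ along composition series of $A$ and $B$, noting that Lemma \ref{cecsplit} gives vanishing on the irreducible pieces.

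With that in hand, choose an irreducible submodule $U$ of $V$ and apply the inductive hypothesis to $V/U$ to obtain a decomposition $V/U = \bar V^+ \oplus \bar V^-$ with $\bar V^+$ $\fF$-hypercentral and $\bar V^-$ $\fF$-hypereccentric. Suppose $U$ is $\fF$-central, and let $V^+$ denote the preimage of $\bar V^+$ in $V$; its composition factors are $U$ and those of $\bar V^+$, all $\fF$-central, so $V^+$ is $\fF$-hypercentral. The quotient $V/V^+ \cong \bar V^-$ is $\fF$-hypereccentric, and the enhanced splitting lemma produces a complement $V^-$ in $V$ that is isomorphic to $\bar V^-$ and hence $\fF$-hypereccentric. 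This gives $V = V^+ \oplus V^-$. The case in which $U$ is $\fF$-eccentric is handled by the symmetric choice: take $V^-$ to be the preimage of $\bar V^-$, which is $\fF$-hypereccentric, and split off a hypercentral complement.

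The main obstacle is the enhanced splitting lemma; once $\Ext^1$ between hypercentral and hypereccentric modules is shown to vanish, the direct decomposition of $V$ is a routine inductive repackaging of a composition series. No technical difficulty beyond careful bookkeeping arises in the inductive step proper, since the $\fF$-hypercentrality and $\fF$-hypereccentricity of the pieces constructed are immediate from their composition factors.
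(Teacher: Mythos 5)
Your proof is correct, but it takes a genuinely different route from the paper's. The paper keeps Lemma \ref{cecsplit} in its irreducible form and works directly with a composition series of $V$: whenever an $\fF$-eccentric factor $V_i/V_{i-1}$ sits immediately below an $\fF$-central factor $V_{i+1}/V_i$, the lemma applied to the length-two subquotient $V_{i+1}/V_{i-1}$ lets the two factors be interchanged; iterating produces one series with all $\fF$-central factors at the bottom (yielding $V^+$ with $V/V^+$ hypereccentric) and another with all $\fF$-eccentric factors at the bottom (yielding $V^-$ with $V/V^-$ hypercentral), and $V = V^+ \oplus V^-$ then follows because $V^+ \cap V^-$ is simultaneously $\fF$-hypercentral and $\fF$-hypereccentric, hence $0$, while $V/(V^+ + V^-)$ is a quotient of both $V/V^+$ and $V/V^-$, hence $0$. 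You instead bootstrap the lemma by d\'evissage to the vanishing of $\Ext^1_{FG}(B,A)$ whenever one of $A,B$ is $\fF$-hypercentral and the other $\fF$-hypereccentric, and then split off the top of $V$ by induction on composition length; both halves of your argument are sound (the long exact sequences reduce everything to the irreducible base case supplied by Lemma \ref{cecsplit}, and submodules and quotients inherit hypercentrality and hypereccentricity). The paper's swap argument is more economical, never invoking $\Ext$ beyond the irreducible case. Your route costs a routine double induction but buys a stronger, reusable statement: full $\Ext^1$-orthogonality between the two classes, which together with the evident vanishing of $\Hom$ between them shows the decomposition is unique and compatible with module maps, and which is precisely the splitting of arbitrary extensions of $W$ by $V$ that the paper asserts (and needs) in the proof of Theorem \ref{cent-ecc} --- there it is deduced from Theorem \ref{components}, whereas you establish it first and derive the decomposition from it.
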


\begin{proof}  Let $V_0 = 0 \subset V_1 \subset \dots \subset V_n = V$ be a composition series of $V$.  If for some $i$, we have $V_i/V_{i-1}$ $\fF$-eccentric and $V_{i+1}/V_i$ $\fF$-central, by Lemma \ref{cecsplit}, we can replace $V_i$ by a submodule $V'_i$ between $V_{i+1}$ and $V_{i-1}$, so bringing the $\fF$-central factor below the $\fF$-eccentric factor.  By repeating this, we obtain a composition series in which all $\fF$-central factors are below all $\fF$-eccentric factors.  This gives us an $\fF$-hypercentral submodule $V^+$ with $V/V^+$ $\fF$-hypereccentric.  Likewise, we can bring all $\fF$-eccentric factors below the $\fF$-central factors,  so obtaining an $\fF$-hypereccentric submodule $V^-$ with $V/V^-$$\fF$-hypercentral.  Clearly, $V = V^+ \oplus V^-$.
\end{proof}

For Lie and Leibniz algebras, there is a strengthened form (\cite[Lemma 1.1]{extras} and \cite[Theorem 3.19]{SchunckLeib}) of this theorem.  If $U$ is a subnormal subalgebra of the not necessarily soluble algebra $L$ and $V$ is an $L$-module, then the $U$-module components $V^+, V^-$ are $L$-submodules.

\begin{theorem} \label{normcomp}    Let $U \in \fF$ be a normal subgroup of the not necessarily soluble group $G$ and let $V$ be an $FG$-module.  Then the $(U, \fF)$-components $V^+, V^-$ are $FG$-submodules.
\end{theorem}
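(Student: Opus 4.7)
The plan is to apply Theorem~\ref{components} with $U$ (rather than $G$) acting on $V$ to obtain a decomposition $V = V^+ \oplus V^-$ as $FU$-modules, and then to argue that this decomposition is intrinsic enough to be stable under the $G$-action.

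First I would establish uniqueness: $V^+$ is the unique maximal $\fF$-hypercentral $FU$-submodule of $V$, and $V^-$ the unique maximal $\fF$-hypereccentric one. Indeed, if $W$ is any $\fF$-hypercentral $FU$-submodule, then its image under the projection $V \to V/V^+ \cong V^-$ is simultaneously $\fF$-hypercentral (as a quotient of $W$) and $\fF$-hypereccentric (as a submodule of $V^-$). But a nonzero module cannot be both, since any composition factor would have to be both $\fF$-central and $\fF$-eccentric. Hence $W \subseteq V^+$, and the symmetric argument handles $V^-$.

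The central geometric step is to show that $gV^+$ is again $\fF$-hypercentral for each $g \in G$. Because $U$ is normal in $G$, the translate $gW$ of any $FU$-submodule $W$ is again an $FU$-submodule, via $u \cdot gw = g(g^{-1}ug)w$. For irreducible $W$ this gives $\cser_U(gW) = g\,\cser_U(W)\,g^{-1}$, so $U/\cser_U(gW) \cong U/\cser_U(W)$ as groups. Since $f(p)$ is a formation, hence closed under isomorphism, $\fF$-centrality of irreducible $FU$-modules is preserved under conjugation by elements of $G$. Applied to each composition factor of $V^+$, this shows $gV^+$ is $\fF$-hypercentral, so by maximality $gV^+ \subseteq V^+$. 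Applying the same to $g^{-1}$ yields equality, and the symmetric argument gives $gV^- = V^-$.

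The main obstacle is the centralizer computation combined with the formation property of $f(p)$; once conjugation is shown to preserve $\fF$-centrality of irreducible $FU$-modules, the uniqueness observation forces $G$-invariance. The argument relies critically on normality of $U$ (to keep $gW$ an $FU$-module) and on $f(p)$ being closed under group isomorphism (so that conjugate centralizer quotients are interchangeable).
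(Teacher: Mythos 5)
Your proof is correct and takes essentially the same route as the paper's: translation by $g \in G$, normality of $U$ giving $u\cdot gw = g(g^{-1}ug)w$ so that $gW$ is an $FU$-submodule, the centralizer computation $\cser_U(gA) = g\,\cser_U(A)\,g^{-1}$ showing conjugation preserves $\fF$-centrality of composition factors, and then maximality forcing $gV^+ \subseteq V^+$. The only difference is that you spell out the uniqueness of $V^+$ and $V^-$ as the maximal $\fF$-hypercentral and $\fF$-hypereccentric submodules (via the projection argument), a step the paper's proof uses implicitly in the final containment.
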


\begin{proof} Let $W$ be either of $V^+,V^-$and let $g \in G$.  Consider the action of $u \in U$ on $gW$.  We have $ugW = g(g^{-1}ug)W \subseteq gW$.  Thus $gW$ is a $U$-submodule of $V$.  If $A$ is a composition factor of $W$, then $gA$ is a composition factor of $gW$, and $\cser_U(gA) = g \cser_U(A)g^{-1}$.  Thus $U/\cser_U(gA) \simeq U/\cser_U(A)$ and $gA$ is $(U,\fF)$-central if and only if $A$ is $(U,\fF)$-central.  Thus $gV^+$ is $(U, \fF)$-hypercentral and so $gV^+ \subseteq V^+$ for all $g \in G$.  Similarly, $gV^- \subseteq V^-$ for all $g \in G$.
\end{proof}

\begin{example} \label{nonsubn} Let $G$ be the group of permutations of the set of symbols $\{e_1, \dots, e_6\}$ generated by the permutations of $\{e_1, e_2, e_3\}$ and the permutation $(14)(25)(36)$ and let $U$ be the subgroup generated by the permutation $(123)$. Then $U$ is subnormal in $G$, being normal in the subgroup $N$ consisting of those permutations which map $\{e_1, e_2, e_3\}$ into itself.  Let $F$ be a field of characteristic $2$ and let $V$ be the vector space over $F$ with basis $\{e_1, \dots, e_6\}$.  Let $\fF$ be the saturated formation of all nilpotent groups.  Then $\fF$ is locally defined by the function $f(p)= \{1\}$ for all primes $p$.  Considering $V$ as $U$-module, we have $V^+ = \langle e_1+e_2+e_3, e_4, e_5, e_6 \rangle$ and $V^- = \langle e_1+  e_2,  e_2+  e_3\rangle$.  (If $F$ contains a root of $x^2+x+1$, then $V^-$ is the direct sum of two non-trivial 1-dimensional modules.  Otherwise, $V^-$ is irreducible.)   $V^+$ and $V^-$ are invariant under the action of $N$ but not invariant under the action of $G$.  Thus Theorem \ref{normcomp} cannot be extended to $U$ subnormal.
\end{example}

Suppose $G \in \fF$.  Clearly, the trivial $FG$-module $F$ is $\fF$-central.  From Theorem \ref{components}, it follows that if $V$ is an irreducible $FG$-module in the principal block, then $V$ is $\fF$-central.   Since $H^n(G,V)=0$ for all $n$ if $V$ is not in the principal block, it follows that for any $\fF$-hypereccentric module $V$, we have $H^n(G,V)=0$ for all $n$.  We cannot conclude from $H^n(G,V)=0$ for all $n$ that $V$ is $\fF$-hypereccentric as, for any $V$, there is some $\fF$ for which $V$ is $\fF$-hypercentral.  To obtain a sufficient condition for $V$ to be $\fF$-hypereccentric, we use the $\fF$-cone over $G$.

\begin{definition}  Suppose $G \in \fF$.  The $\fF$-cone over $G$ is the class $(\fF/G)$ of all pairs $(X, \epsilon)$ where $X \in \fF$ and $\epsilon: X \to G$  is an epimorphism.  We usually omit $\epsilon$ from the notation, writing simply $X \in (\fF/G)$.
\end{definition}

Any $FG$-module $V$ is an $FX$-module via $\epsilon$ for any $X \in (\fF/G)$.  Then $V$ is $\fF$-hypercentral or $\fF$-hypereccentric as $FX$-module if and only if it is $\fF$-hypercentral, respectively $\fF$-hypereccentric as $FG$-module.

\begin{theorem} \label{hec-crit}   Suppose $G \in \fF$ and that $H^1(X,V)=0$ for all $X \in (\fF/G)$.  Then $V$ is $\fF$-hypereccentric.
\end{theorem}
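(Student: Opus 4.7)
The plan is to prove the contrapositive: if $V$ is not $\fF$-hypereccentric, I will exhibit an $X \in (\fF/G)$ with $H^1(X, V) \neq 0$. By Theorem \ref{components}, under this assumption $V$ contains a nonzero $\fF$-hypercentral submodule, hence an $\fF$-central irreducible $FG$-submodule $A$. Following the descent used in the proof of Lemma \ref{cecsplit}, I pick a finite-dimensional irreducible $\Fp G$-submodule $C \subseteq A$; since $\cser_G(C) = \cser_G(A)$, the module $C$ is itself $\fF$-central, and being a finite elementary abelian $p$-group it lets me form the split extension $X = C \rtimes G$ as a finite group with natural projection $\epsilon \colon X \twoheadrightarrow G$.

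Next I verify that $X \in \fF$, so that $(X, \epsilon) \in (\fF/G)$. Choosing a chief series of $X$ refining $1 \triangleleft C \triangleleft X$: the only chief factor of $X$ inside $C$ is $C$ itself (since $C$ is abelian it acts trivially on itself, so $C$ is $X$-irreducible), and $X/\cser_X(C) \cong G/\cser_G(C) \in f(p)$ by the $\fF$-centrality of $C$. The chief factors of $X$ strictly above $C$ correspond to chief factors of $X/C \cong G$, and because $C$ lies in their $X$-centralisers one has $X/\cser_X(-) \cong G/\cser_G(-)$, which satisfies the appropriate $f$-condition because $G \in \fF$. Hence $X \in \loc(f) = \fF$.

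Now I view $V$ as an $FX$-module through $\epsilon$, so that $C$ acts trivially on $V$, and apply the Lyndon--Hochschild--Serre five-term sequence for $1 \to C \to X \to G \to 1$:
$$0 \to H^1(G, V) \to H^1(X, V) \to H^1(C, V)^G \xrightarrow{d_2} H^2(G, V).$$
Since $X = C \rtimes G$ is split, any $G$-equivariant homomorphism $\phi \colon C \to V$ extends to the crossed homomorphism $cg \mapsto \phi(c)$ on $X$, so the restriction surjects onto $H^1(C, V)^G = \Hom_{\Fp G}(C, V)$ and $d_2 = 0$. The inclusion $C \hookrightarrow V$ is a nonzero element of $\Hom_{\Fp G}(C, V)$, whence $H^1(X, V) \neq 0$, contradicting the hypothesis. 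The main obstacle is the vanishing of $d_2$; for a split extension this is standard and is made concrete by the explicit lift just described. The remaining work (membership $X \in \fF$ and the descent from $A$ to the $\Fp G$-submodule $C$) is routine bookkeeping in the style of Lemma \ref{cecsplit}.
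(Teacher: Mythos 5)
Your proof is correct, and while it shares the paper's overall skeleton---descend to a finite-dimensional $\fF$-central irreducible $\Fp G$-submodule, form a split extension $X \in (\fF/G)$ of it by $G$, and apply Hochschild--Serre---it diverges at the crux. The paper never shows the transgression vanishes: instead it takes $A$ to be a direct sum of \emph{many} copies of a minimal $\Fp G$-submodule $W$, enough that $\dim_F \Hom_{\Fp G}(A,V) > \dim H^2(G,V)$, so that $d_2^{0,1}\colon E_2^{0,1} \to E_2^{2,0} = H^2(G,V)$ is forced to have nonzero kernel for pure dimension reasons, whence $E_3^{0,1} \ne 0$ and $H^1(X,V) \ne 0$. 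You instead kill the differential outright: your explicit lift $cg \mapsto \phi(c)$ of a $G$-equivariant homomorphism $\phi \in \Hom_{\Fp G}(C,V)$ to a $1$-cocycle on the split extension (the verification works precisely because $C$ is abelian and acts trivially on $V$, so $f(cg\,c'g') = \phi(c) + \phi(gc'g^{-1}) = \phi(c) + g\phi(c') = f(cg) + (cg)f(c'g')$) shows restriction $H^1(X,V) \to H^1(C,V)^G = \Hom_{\Fp G}(C,V)$ is surjective, and then the single nonzero class given by the inclusion $C \hookrightarrow V$ already yields $H^1(X,V) \ne 0$. What your route buys: one copy of $C$ suffices, only the five-term exact sequence is needed rather than spectral-sequence bookkeeping, and you never need $H^2(G,V)$ to be finite-dimensional, so no finiteness hypothesis lurks in the counting. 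What the paper's counting buys: it requires no structural information about $d_2$ beyond its source and target, so no cocycle computation at all. Your subsidiary steps are sound: the equality $\cser_G(C) = \cser_G(A)$ is justified exactly as in Lemma \ref{cecsplit} (the $F$-span of $C$ is all of $A$), and your chief-series verification that $X \in \loc(f) = \fF$ (with $C$ a minimal normal subgroup since it is $\Fp G$-irreducible and abelian, and $X/\cser_X(C) \simeq G/\cser_G(C) \in f(p)$) makes explicit what the paper compresses into ``As $W$ is $\fF$-central, $X \in (\fF/G)$.''
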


\begin{proof} By Theorem \ref{components}, $V$ is a direct sum of a $\fF$-hypercentral module and a $\fF$-hypereccentric module.  Thus, without loss of generality, we may suppose that $V$ is $\fF$-hypercentral and we then have to prove that $V=0$.   So suppose that $V \ne 0$.  There exists a minimal $\Fp G$-module $W$ of $V$. ($W$ is finite-dimensional, whatever the field $F$.)  We form the direct sum $A$ of sufficiently many copies of $W$ to ensure that $\dim_F \Hom_{\Fp G}(A,V) > \dim H^2(G,V)$.  Let $X$ be the split extension of $A$ by $G$.  As $W$ is $\fF$-central, $X \in (\fF/G)$ and by assumption, $H^1(X,V)=0$.  We use the Hochschild-Serre spectral sequence to calculate $H^1(X,V)$.  We have 
$$E^{2,0}_2 = H^2(X/A, V^A) = H^2(G,V)$$
and
$$E^{0,1}_2 = H^0(X/A, H^1(A,V)) = \Hom_{\Z}(A,V)^G = \Hom_{\Fp G}(A,V).$$
Now $d^{0,1}_2$ maps $E^{0,1}_2$ into $E^{2,0}_2 = H^2(G,V)$.  As $\dim H^2(G,V) < \dim E^{0,1}_2$, we have $\ker(d^{0,1}_2) \ne 0$.  So $E^{0,1}_3 \ne 0$ and $H^1(X,V) \ne 0$ contrary to assumption.
\end{proof}

\begin{theorem} \label{cent-ecc}   Suppose $G \in \fF$.  Suppose that $V$ is an $\fF$-hypercentral $FG$-module and that $W$ is an $\fF$-hypereccentric $FG$-module.  Then $V \otimes W$ and $\Hom(V,W)$ are $\fF$-hypereccentric.
\end{theorem}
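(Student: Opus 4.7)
The plan is to invoke Theorem \ref{hec-crit}: it suffices to show $H^1(X, V \otimes W) = 0$ and $H^1(X, \Hom(V,W)) = 0$ for every $X \in (\fF/G)$. So fix such an $X$ with epimorphism $\epsilon \colon X \to G$. Pulled back via $\epsilon$, the centralizer of any $FG$-module $M$ satisfies $\cser_X(M) = \epsilon^{-1}(\cser_G(M))$, so $X/\cser_X(M) \cong G/\cser_G(M)$; hence an irreducible $FG$-module remains $\fF$-central (respectively $\fF$-eccentric) when regarded as an $FX$-module, and $V$ remains $\fF$-hypercentral while $W$ remains $\fF$-hypereccentric over $X$.

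Next I would reduce to the irreducible case by d\'evissage. Given a short exact sequence $0 \to V' \to V \to A \to 0$ of $FX$-modules with $A$ an irreducible $\fF$-central quotient, tensoring with $W$ over the field $F$ preserves exactness, and the long exact sequence in group cohomology shows that $H^1(X, V \otimes W) = 0$ whenever both $H^1(X, V' \otimes W)$ and $H^1(X, A \otimes W)$ do. Induction on the composition length of $V$, followed by the analogous procedure on $W$, reduces the tensor claim to proving $H^1(X, A \otimes B) = 0$ for any irreducible $\fF$-central $A$ and irreducible $\fF$-eccentric $B$. The $\Hom$ case reduces analogously, using that $\Hom(-, W)$ and $\Hom(A, -)$ are exact over $F$; the reduced claim is $H^1(X, \Hom(A, B)) = 0$ for the same $A$ and $B$.

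In the irreducible case, $H^1(X, \Hom(A, B)) \cong \Ext^1_{FX}(A, B)$, and since $X \in \fF$ with $A$ of one type and $B$ of the other, Lemma \ref{cecsplit} gives $\Ext^1_{FX}(A, B) = 0$. For the tensor product I would use the $FX$-module isomorphism $A \otimes B \cong \Hom(A^*, B)$ together with the observation (already used in the proof of Lemma \ref{cecsplit}) that $A^*$ is again irreducible and $\fF$-central; this reduces $H^1(X, A \otimes B)$ to $\Ext^1_{FX}(A^*, B)$, which vanishes by the same lemma.

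The main points to verify are that $\fF$-centrality is preserved under pullback along $\epsilon$ and under dualization; both follow at once from the centralizer identity above. The real content is concentrated in Lemma \ref{cecsplit}, with Theorem \ref{hec-crit} playing the role of the bridge between Ext-vanishing and the hypereccentricity conclusion.
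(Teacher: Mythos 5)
Your proposal is correct and takes essentially the same route as the paper: both reduce via Theorem \ref{hec-crit} to showing $H^1(X,\Hom(V,W))=0$ for every $X \in (\fF/G)$, and both then obtain the tensor case from $V \otimes W \simeq \Hom(V^*,W)$ with $V^*$ again $\fF$-hypercentral. The only difference is bookkeeping: the paper gets the $H^1$ vanishing in one stroke from the splitting of every $FX$-extension of $W$ by $V$ (a consequence of Theorem \ref{components}), whereas you reach the same vanishing by induction on composition length down to $\Ext^1_{FX}(A,B)=0$ from Lemma \ref{cecsplit} --- the very lemma on which Theorem \ref{components} rests.
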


\begin{proof}  Let $X \in (\fF/G)$.  Then $V$ and $W$ are $\fF$-hypercentral and $\fF$-hypereccentric respectively as $FX$-modules.  Every $FX$-module extension of $W$ by $V$ splits.  Thus $H^1(X,\Hom(V,W)) = 0$.  By Theorem \ref{hec-crit}, $\Hom(V,W)$ is $\fF$-hypereccentric.  The dual module $V^* = \Hom(V,F)$ is also $\fF$-hypercentral. As
$$V \otimes W \simeq V^{**} \otimes W \simeq \Hom(V^*,W),$$
we have also that $V \otimes W$ is $\fF$-hypereccentric.
\end{proof}

\section{Blocks}
Let $F$ be a field of characteristic $p$ and let $\fF$ be a saturated formation of finite soluble groups locally defined by the function $f$ with $f(p) \ne \emptyset$.  Let $U \in \fF$ be a normal subgroup of the not necessarily soluble group $G$.  Then the direct decomposition $V = V^+ \oplus V^-$ with respect to $U$ given by Theorem \ref{normcomp} is natural.  But if we take a partition $\sB = \sB^+ \cup \sB^-$ of the set $\sB$ of blocks of $FG$-modules, we have a natural direct decomposition $V = V^+ \oplus V^-$ of $FG$-modules where every irreducible composition factor of $V^+$ is in a block in $\sB^+$ and every composition factor of $V^-$ is in a block in $\sB^-$.  Further, every natural direct decomposition of $FG$-modules has this form.  Thus the $(U,\fF)$ direct decomposition $V = V^+ \oplus V^-$ is the $(\sB^+, \sB^-)$ decomposition for some partition of $\sB$.  It follows that if some irreducible $FU$-module in an $FG$-block $B$ is $\fF$-central, then all irreducibles in $B$ are $\fF$-central.  The special case of this where $U=G$ and $F = \Fp$ has been  proved without assuming $\fF$ locally defined  as a stage in a proof that all saturated formations are locally definable.  (See Doerk and Hawkes \cite[Lemma IV 4.4]{DH}.)  I investigate the relationship between $\fF$ and the partition $(\sB^+, \sB^-)$.

\begin{lemma} \label{chiefplus}  Let $A/B$ be a $p$-chief factor of $U$ and let $V = F \otimes_{\Fp}(A/B)$.  Then $V$ is $(U,\fF)$-hypercentral.
\end{lemma}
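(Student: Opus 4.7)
The plan is to trace the centralizer from the $\Fp U$-module $A/B$ through scalar extension to $V$, then through each $FU$-composition factor of $V$, using that $f(p)$ is closed under homomorphic images.

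First, since $U\in\fF=\loc(f)$ and $A/B$ is a $p$-chief factor of $U$, the definition of the local formation directly gives $U/\cser_U(A/B)\in f(p)$. So as an $\Fp U$-module, $A/B$ is already $(U,\fF)$-central.

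Next, I would observe that extension of scalars does not change the centralizer: the action of $u\in U$ on $V=F\otimes_{\Fp}(A/B)$ is the $F$-linear extension of its action on $A/B$, so $u$ acts trivially on $V$ if and only if it acts trivially on $A/B$. Hence $\cser_U(V)=\cser_U(A/B)$ and therefore $U/\cser_U(V)\in f(p)$.

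Finally, let $W$ be any composition factor of the $FU$-module $V$, realised as $V_i/V_{i-1}$ in some composition series. Any element of $\cser_U(V)$ acts trivially on $V_i$ and hence on the quotient $W$, so $\cser_U(W)\supseteq\cser_U(V)$. Thus $U/\cser_U(W)$ is a homomorphic image of $U/\cser_U(V)\in f(p)$, and since the formation $f(p)$ is closed under homomorphic images, $U/\cser_U(W)\in f(p)$, i.e.\ $W$ is $(U,\fF)$-central. Every composition factor of $V$ being $\fF$-central is exactly the definition of $\fF$-hypercentral, finishing the proof. There is no real obstacle here; the only thing to check carefully is that $A/B$ being an $\Fp U$-irreducible module may become reducible after extending scalars to $F$, but this causes no trouble because the centralizer argument passes uniformly to every composition factor.
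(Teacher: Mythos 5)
Your proof is correct and follows essentially the same route as the paper, which compresses your three steps (local definition gives $U/\cser_U(A/B)\in f(p)$, scalar extension preserves the centraliser, composition factors have larger centralisers and $f(p)$ is quotient-closed) into a two-line argument. You have merely made explicit the details the paper leaves implicit.
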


\begin{proof} Since $U \in \fF$, $U/\cser_U(A/B) \in f(p)$.  Therefore $U/\cser_U(V) \in f(p)$.
\end{proof}

Green and Hill have proved (see Doerk and Hawkes \cite[Theorem B 6.17, p.136]{DH}  that if $A/B$ is a $p$-chief factor of a $p$-soluble group $U$, then $A/B \in B_1(\Fp U)$.  The following lemma generalises this.

\begin{lemma}  \label{chiefblock} Let $A/B$ be a $p$-chief factor of the $p$-soluble group $U$. Then every composition factor of $F \otimes_{\Fp}(A/B)$ is in $B_1(FU)$.
\end{lemma}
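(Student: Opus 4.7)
The plan is to use the Green--Hill theorem cited just above the lemma as a black box, and transfer its conclusion from $\Fp U$-blocks to $FU$-blocks by a base-change argument on block idempotents.

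Since $A/B$ is a $p$-chief factor of $U$, it is irreducible as an $\Fp U$-module, so by Green--Hill $A/B\in B_1(\Fp U)$. Equivalently, the primitive central idempotent $e\in\Fp U$ cutting out $B_1(\Fp U)$ acts as the identity on $A/B$, and hence also on $F\otimes_{\Fp}(A/B)$ after extending scalars.

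The heart of the argument is then to show that, viewed inside $FU$ via the inclusion $\Fp U\hookrightarrow FU$, the element $e$ is itself the principal block idempotent of $FU$, rather than splitting into several primitive central idempotents of $FU$. The standard fact I would invoke is that the principal block idempotent of $kU$ admits an explicit description as a rational linear combination of $p$-regular class sums of $U$, with coefficients in the prime field, and that this expression is independent of the characteristic-$p$ field $k$. Given this, the equation $e\cdot\bigl(F\otimes_{\Fp}(A/B)\bigr)=F\otimes_{\Fp}(A/B)$ forces every composition factor of $F\otimes_{\Fp}(A/B)$ to lie in $B_1(FU)$.

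The main obstacle is justifying this base-change statement for the principal block idempotent cleanly. If it cannot be cited conveniently, I would fall back on the Fong-style description of $B_1$ for $p$-soluble groups: an irreducible $FU$-module lies in $B_1(FU)$ iff $O_{p'}(U)$ acts trivially on it. A short commutator computation in $U/B$ gives $[O_{p'}(U),A]\subseteq B$, because $A/B$ is a $p$-group while the image of $O_{p'}(U)$ in $U/B$ has order prime to $p$; hence $O_{p'}(U)$ centralises $F\otimes_{\Fp}(A/B)$ and every composition factor again lies in $B_1(FU)$.
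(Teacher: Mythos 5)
Your fallback argument is correct and is essentially the paper's own proof: the paper notes that $O_{p'p}(U)$ is the intersection of the centralisers of the $p$-chief factors of $U$, so it centralises $A/B$ and hence every composition factor of $F\otimes_{\Fp}(A/B)$, and then quotes the Fong--Gasch\"utz theorem in the form that an irreducible $FU$-module $V$ lies in $B_1(FU)$ if and only if $O_{p'p}(U)\subseteq\cser_U(V)$, valid over an arbitrary field of characteristic $p$. Your $O_{p'}(U)$ form of Fong's criterion is equivalent, since a normal $p$-subgroup acts trivially on every irreducible module in characteristic $p$. One repair to your commutator step: coprimality by itself is not a reason for $[O_{p'}(U),A]\subseteq B$ --- a $p'$-group can certainly act nontrivially on a $p$-group (e.g.\ inversion on $C_3$); the correct argument is that $A/B$ and $O_{p'}(U)B/B$ are both \emph{normal} in $U/B$, so their commutator lies in their intersection, which is trivial because one is a $p$-group and the other a $p'$-group. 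Your primary route via block idempotents is genuinely different and does work, but it is noticeably heavier machinery. The base-change fact you flag as the obstacle is true: the principal block idempotent of $\overline{\Fp}U$ is fixed by the Galois group of $\overline{\Fp}/\Fp$ (because the trivial module is Galois-stable), hence has coefficients in $\Fp$, and primitive central idempotents do not split further under extension of scalars in characteristic $p$, so this same element is the principal block idempotent of $FU$; granting it, your argument that $e$ acts as the identity on $F\otimes_{\Fp}(A/B)$ forces all composition factors into $B_1(FU)$ is sound. What that route buys is a general transfer principle (any $\Fp$-statement about membership in the principal block passes to arbitrary $F$), so Green--Hill can be used as a black box; what it costs is nontrivial block theory that you correctly admit you have not justified, and since Green--Hill itself requires $p$-solubility, it gains no generality here over the Fong--Gasch\"utz route, which is why the paper's two-line proof is preferable. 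As submitted, the proposal stands only because the fallback is complete (modulo the normality point above); the primary route alone would not.
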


\begin{proof} The largest $p$-nilpotent normal subgroup $O_{p'p}(U)$ is the intersection of the centralisers of the $p$-chief factors of $U$, so $O_{p'p}(U) \subseteq \cser_U(A/B)$. If $V$ is a composition factor of $F \otimes_{\Fp}(A/B)$, then $\cser_U(V) \supseteq \cser_U(A/B)$.  By a theorem of Fong and Gasch\"utz, (see Doerk and Hawkes \cite[Theorem B 4.22, p. 118]{DH}) an irreducible $FU$-module $V$ is in the principal block if and only if $O_{p'p}(U) \subseteq \cser_U(V)$.   Therefore $V \in B_1(FU)$.
\end{proof}

\begin{lemma} \label{princ} Let $V$ be an irreducible $FG$-module in the principal block.  Then $V$ is $(U,\fF)$-hypercentral.
\end{lemma}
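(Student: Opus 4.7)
The plan is to combine Theorem~\ref{normcomp} with the block-partition discussion preceding Lemma~\ref{chiefplus}, then pin down which side of the partition contains the principal block by examining the trivial module.

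Applying Theorem~\ref{normcomp} to $V$ with respect to the normal subgroup $U\in\fF$ gives an $FG$-submodule decomposition $V=V^+\oplus V^-$, in which $V^+$ is $(U,\fF)$-hypercentral and $V^-$ is $(U,\fF)$-hypereccentric. Since $V$ is irreducible as $FG$-module, exactly one of $V^+,V^-$ is all of $V$. By the remarks immediately preceding Lemma~\ref{chiefplus}, this natural decomposition of $FG$-modules corresponds to a partition $\sB=\sB^+\cup\sB^-$ of the set of $FG$-blocks; so the problem reduces to showing $B_1(FG)\subseteq\sB^+$.

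To place the principal block, I would look at the trivial $FG$-module $F\in B_1(FG)$. As an $FU$-module it is still the trivial module, and since $U\in\fF$ with $f(p)\ne\emptyset$ (so that the quotient closure of $f(p)$ forces $1\in f(p)$), we have $U/\cser_U(F)=1\in f(p)$. Hence the trivial $FU$-module is $(U,\fF)$-central, so the trivial $FG$-module is $(U,\fF)$-hypercentral; that is, $F=F^+$ and $B_1(FG)\subseteq\sB^+$. Consequently the irreducible $V\in B_1(FG)$ lies in $\sB^+$, i.e.\ $V=V^+$ is $(U,\fF)$-hypercentral. There is no real obstacle in the argument; the only point to watch is that the partition $\sB^+\cup\sB^-$ sorts $FG$-blocks while $(U,\fF)$-centrality is measured on $FU$-composition factors, so the trivial module must be tested from the $FU$ side even though it then determines the $FG$-side of the partition.
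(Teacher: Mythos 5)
Your argument is correct, but it takes a genuinely different route from the paper's own proof. You derive the lemma from the block-partition formalism asserted in the preamble of Section 3: the natural $(U,\fF)$-decomposition $V=V^+\oplus V^-$ of Theorem \ref{normcomp} coincides with the decomposition attached to some partition $\sB=\sB^+\cup\sB^-$ of the $FG$-blocks, and you then anchor $B_1(FG)$ on the hypercentral side by testing the trivial module (correctly noting that $f(p)\ne\emptyset$, which is a standing hypothesis of Section 3, together with quotient-closure of formations gives $1\in f(p)$, so the trivial $FU$-module is $(U,\fF)$-central). The paper instead works directly from the linkage characterization of the principal block: since $V\in B_1(FG)$ there is a chain of irreducible $FG$-modules $V_0=F, V_1,\dots,V_n=V$ in which each adjacent pair is joined by a non-split extension $X_i$; if the $(U,\fF)$-type flipped from hypercentral to hypereccentric at some step $k$, Theorem \ref{normcomp} would decompose $X_k$ into its plus and minus parts, contradicting the indecomposability of a non-split extension of one irreducible by another, so $V$ inherits hypercentrality from the trivial module. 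The two proofs share the same skeleton --- Theorem \ref{normcomp} makes the $(U,\fF)$-type constant across a block, and the trivial module pins down $B_1(FG)$ --- but the paper's version is more self-contained, using only the elementary chain description of blocks, whereas yours rests on the stronger statement, asserted without proof in the preamble, that \emph{every} natural direct decomposition of $FG$-modules arises from a partition of the blocks (equivalently, from central idempotents). Since that preamble precedes the lemma and is taken as known in the paper, your derivation is legitimate and in fact shorter; just be aware that the paper's chain argument is essentially the elementary content underlying the block-invariance you import wholesale.
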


\begin{proof}  Since $V \in B_1(G)$, there exists a chain of irreducible $G$-modules $V_0,  \dots, V_n = V$ where $V_0$ is the trivial module, and modules $X_1, \dots, X_n$ where $X_i$ is a non-split extension of one of $V_{i-1}, V_i$ by the other.  If $V$ is not $\fF$-central, then for some $k$, we have $V_{k-1}$ $\fF$-central and $V_k$ 
$\fF$-eccentric.  But then, by Theorem \ref{normcomp}, $X_i$ cannot be indecomposable.
\end{proof}

If $\fF$ is the smallest saturated formation containing the soluble group $G$ and $N = O_{p'p}$ is the largest $p$-nilpotent normal subgroup of $G$, then for $f(p)$, we may take the smallest formation containing $G/N$.

\begin{lemma} \label{fpres} Let $K$ be the $f(p)$-residual of $G$.  Then $K = N$.
\end{lemma}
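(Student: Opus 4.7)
The plan is to prove $K=N$ by establishing both inclusions separately.

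The inclusion $K\subseteq N$ is immediate: by the defining property of $f(p)=\mathrm{form}(G/N)$, the quotient $G/N$ itself lies in $f(p)$, so $N$ appears as one of the normal subgroups $M$ satisfying $G/M\in f(p)$, and the residual $K$ (being the intersection of all such $M$) therefore lies inside $N$.

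For the substantive direction $N\subseteq K$, my plan is an argument by contradiction: suppose $M\unlhd G$ with $M\subsetneq N$ and $G/M\in f(p)$. The image $N/M$ is then a nontrivial normal subgroup of $G/M$ which is $p$-nilpotent, being a quotient of $N=O_{p'p}(G)$. The crux is that this is incompatible with $G/M$ lying in $\mathrm{form}(G/N)$. The relevant structural feature is that $G/N=G/O_{p'p}(G)$ satisfies $O_p(G/N)=1$, and the paper's integrated local function $f$ together with $\fF=\loc(f)$ should propagate an analogous constraint on normal $p$-subgroups (modulo the $p'$-radical) to every member of $f(p)\subseteq\fF$. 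Applied to $G/M$, this forces the $p$-part of $N/M$ to vanish, and a further appeal to the $p'$-part (using $O_{p'}(G)$ and the chief-factor characterization $N=\bigcap_{A/B}\cser_G(A/B)$ over $p$-chief factors of $G$) completes the contradiction.

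A cleaner alternative would be to invoke the subdirect-product description of $f(p)$: write $G/M$ as a subdirect product of quotients of $G/N$, so that $M=\bigcap L_i$ with each $G/L_i$ abstractly isomorphic to some $G/J_i$ with $J_i\supseteq N$. The main obstacle here is the canonical-versus-abstract subtlety: \emph{a priori} these isomorphisms need not be induced by quotient maps of $G$, so one must exploit the intrinsic characterization of $N$ (as $O_{p'p}(G)$, or equivalently as the intersection of centralizers of the $p$-chief factors of $G$) to conclude that every such $L_i$ in fact contains $N$. Once this is done, $M=\bigcap L_i\supseteq N$ follows, completing the argument.
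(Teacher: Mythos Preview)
Your proposal is a plan rather than a proof, and unfortunately the gaps you flag are genuine and not easily closed along the lines you suggest.

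In your first approach, the crucial claim is that the property ``$O_p(\,\cdot\,/O_{p'}(\,\cdot\,))=1$'' (equivalently $O_{p'p}=O_{p'}$) holds for every member of $f(p)=\mathrm{form}(G/N)$. This is false. Take $G=S_4$, $p=2$: then $N=O_{2'2}(S_4)=V_4$ and $G/N\cong S_3$, so $C_2\cong S_3/A_3\in\mathrm{form}(G/N)$; but $O_{2'2}(C_2)=C_2\neq 1=O_{2'}(C_2)$. The property you want is simply not preserved under quotients, hence not a formation property, so it cannot be ``propagated'' from $G/N$ to all of $f(p)$. The vague appeal to the integrated local definition does not repair this.

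In your second approach, the canonical-versus-abstract obstacle you identify is exactly the whole difficulty. Knowing only that $G/L_i$ is abstractly isomorphic to some $G/J_i$ with $J_i\supseteq N$ does not by itself force $L_i\supseteq N$; the ``intrinsic characterisation'' of $N$ as $O_{p'p}(G)$ or as $\bigcap\cser_G(A/B)$ is a statement about $G$, not about the isomorphism type of $G/L_i$. Making this work is tantamount to proving the lemma.

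The paper takes a different route that sidesteps both problems. It argues by a minimal counterexample, reducing to the case where $O_{p'}(G)=1$ and $N$ is the \emph{unique} minimal normal subgroup of $G$. In that situation one only needs $K\neq 1$ to conclude $K\supseteq N$ and hence $K=N$. For this the paper invokes the Fitting (nilpotent) length: if $G$ has nilpotent length $r$, then $G/N$ has nilpotent length at most $r-1$ (since $F(G)\subseteq N$), and $\fN^{r-1}$ is a formation, so $f(p)=\mathrm{form}(G/N)\subseteq\fN^{r-1}$; thus $G\notin f(p)$ and $K\neq 1$. The key idea you are missing is precisely this use of a formation-closed numerical invariant (nilpotent length) together with a reduction that makes ``$K\neq 1$'' sufficient.
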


\begin{proof}  Let $G$ be a minimal counterexample.  Let $A$ be a minimal normal subgroup of $G$.  Suppose $A \subseteq O_{p'}$.  Then $N/A = O_{p'p}(G/A)$ and the result holds for $G/A$ and so also for $G$.  It follows that $O_{p'}(G) = \{1\}$ and $A \subseteq N$.  If $A \ne N$, again we have that the result holds for $G/A$ and for $G$.  Therefore $N$ is the only minimal normal subgroup of $G$.

The class $\fN^r$ of groups of nilpotent length $r$ is a formation.  If $G$ has nilpotent length $r$, then $G/N$ has nilpotent length $r-1$ and it follows that $f(p) \subseteq \fN^{r-1}$.  Thus the $f(p)$-residual of $G$ cannot be $\{1\}$ and so must be $N$, contrary to the assumption that $G$ is a counterexample.
\end{proof}

\begin{theorem} \label{onlyB1}  Let $G$ be a finite soluble group and let $F$ be a field of characteristic $p$.  Let $\fF$ be the smallest saturated formation containing $G$ and let $V$ be an irreducible $FG$-module.  Then $V$ is $\fF$-central if and only if $V \in B_1(FG)$.
\end{theorem}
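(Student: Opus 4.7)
The plan is to reduce both sides of the equivalence to the single centraliser inclusion
$$O_{p'p}(G) \subseteq \cser_G(V),$$
after which the two directions fall out symmetrically.

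Set $N = O_{p'p}(G)$. By the remark immediately preceding Lemma~\ref{fpres}, since $\fF$ is the smallest saturated formation containing $G$ we may take its integrated local definition so that $f(p)$ is the smallest formation containing $G/N$, and Lemma~\ref{fpres} then identifies $N$ as the $f(p)$-residual of $G$. Since a formation is closed under quotients and subdirect products, for any normal subgroup $H$ of $G$ we have $G/H \in f(p)$ if and only if the $f(p)$-residual is contained in $H$. Taking $H = \cser_G(V)$, we conclude that $V$ is $\fF$-central, i.e.\ $G/\cser_G(V) \in f(p)$, if and only if $N \subseteq \cser_G(V)$.

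On the other side, the Fong--Gasch\"utz theorem invoked in Lemma~\ref{chiefblock} characterises the principal block: an irreducible $FG$-module $V$ lies in $B_1(FG)$ if and only if $O_{p'p}(G) \subseteq \cser_G(V)$. Combining the two reformulations, ``$V$ is $\fF$-central'' and ``$V \in B_1(FG)$'' are equivalent, as required.

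All the substantive content is already in place earlier in the section --- Lemma~\ref{fpres} computes the $f(p)$-residual, and the cited Fong--Gasch\"utz theorem gives the block characterisation --- so the only work is to choose the correct $f(p)$ and to observe that both sides of the theorem are restatements of the inclusion $O_{p'p}(G) \subseteq \cser_G(V)$. (As a sanity check, the ``if'' direction is also directly supplied by Lemma~\ref{princ} applied with $U=G$, which agrees with the argument above.)
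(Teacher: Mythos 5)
Your proof is correct, and its core is the same as the paper's: for the direction ``$\fF$-central implies $V \in B_1(FG)$'' you argue exactly as the paper does, using Lemma~\ref{fpres} to identify the $f(p)$-residual with $O_{p'p}(G)$ so that $G/\cser_G(V) \in f(p)$ forces $O_{p'p}(G) \subseteq \cser_G(V)$, and then quoting Fong--Gasch\"utz. Where you genuinely diverge is the other direction: the paper cites Lemma~\ref{princ}, whose proof runs through the linkage chain of modules in the principal block and the naturality of the $(U,\fF)$-decomposition from Theorem~\ref{normcomp}, whereas you instead invoke the reverse implication of the Fong--Gasch\"utz biconditional together with the standard formation fact that $G/H \in f(p)$ if and only if the $f(p)$-residual lies in $H$, so that \emph{both} directions reduce to the single inclusion $O_{p'p}(G) \subseteq \cser_G(V)$. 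Your route is more self-contained (it needs none of the block-linkage machinery of Section~3 beyond Fong--Gasch\"utz) and it makes the paper's closing remark transparent, namely that only the minimality of $f(p)$ is used; the paper's route via Lemma~\ref{princ} has the compensating advantage that that lemma holds for an arbitrary saturated formation $\fF$ and any normal $U \in \fF$, so the minimality hypothesis is confined to the converse direction. You were also right to fix the local definition explicitly: $\fF$-centrality over a general field $F$ is defined relative to the chosen integrated $f$, so your appeal to the remark before Lemma~\ref{fpres}, taking $f(p)$ to be the smallest formation containing $G/O_{p'p}(G)$, is exactly what makes the statement well-posed, and it is the choice the paper makes implicitly.
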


\begin{proof}  By Lemma \ref{princ}, if $V \in B_1(FG)$, then $V$ is $\fF$-central.  Conversely, if $V$ is $\fF$-central, then $\cser_G(V) \supseteq O_{p'p}(G)$ and $V \in B_1(FG)$ by the Fong-Gasch\"utz Theorem \cite[B 4.22, p.118]{DH}.
\end{proof}

Theorem \ref{onlyB1} does not need the full force of the assumption that $\fF$ is the smallest saturated formation containing $G$, merely that $f(p)$ is minimal.  Restrictions on the $f(q)$ for $q \ne p$ are irrelevant.

\bibliographystyle{amsplain}

\end{document}